\newdimen\proofrulebreadth \proofrulebreadth=.05em
\newdimen\proofdotseparation \proofdotseparation=1.25ex
\newdimen\proofrulebaseline \proofrulebaseline=2ex
\let\then\relax
\def\hfi{\hskip0pt plus.0001fil}
\mathchardef\squigto="3A3B
\newif\ifinsideprooftree\insideprooftreefalse
\newif\ifonleftofproofrule\onleftofproofrulefalse
\newif\ifproofdots\proofdotsfalse
\newif\ifdoubleproof\doubleprooffalse
\let\wereinproofbit\relax
\newdimen\shortenproofleft
\newdimen\shortenproofright
\newdimen\proofbelowshift
\newbox\proofabove
\newbox\proofbelow
\newbox\proofrulename
\def\shiftproofbelow{\let\next\relax\afterassignment\setshiftproofbelow\dimen0 }
\def\shiftproofbelowneg{\def\next{\multiply\dimen0 by-1 }%
\afterassignment\setshiftproofbelow\dimen0 }
\def\setshiftproofbelow{\next\proofbelowshift=\dimen0 }
\def\setproofrulebreadth{\proofrulebreadth}
\def\prooftree{% NESTED ZERO (\ifonleftofproofrule)
%
% first find out whether we're at the left-hand end of a proof rule
\ifnum  \lastpenalty=1
\then   \unpenalty
\else   \onleftofproofrulefalse
\fi
%
% some space on left (except if we're on left, and no infinity for outermost)
\ifonleftofproofrule
\else   \ifinsideprooftree
        \then   \hskip.5em plus1fil
        \fi
\fi
%
% begin our proof tree environment
\bgroup% NESTED ONE (\proofbelow, \proofrulename, \proofabove,
%               \shortenproofleft, \shortenproofright, \proofrulebreadth)
\setbox\proofbelow=\hbox{}\setbox\proofrulename=\hbox{}%
\let\justifies\proofover\let\leadsto\proofoverdots\let\Justifies\proofoverdbl
\let\using\proofusing\let\[\prooftree
\ifinsideprooftree\let\]\endprooftree\fi
\proofdotsfalse\doubleprooffalse
\let\thickness\setproofrulebreadth
\let\shiftright\shiftproofbelow \let\shift\shiftproofbelow
\let\shiftleft\shiftproofbelowneg
\let\ifwasinsideprooftree\ifinsideprooftree
\insideprooftreetrue
%
% now begin to set the top of the rule (definitions local to it)
\setbox\proofabove=\hbox\bgroup$\displaystyle % NESTED TWO
\let\wereinproofbit\prooftree
%
% these local variables will be copied out:
\shortenproofleft=0pt \shortenproofright=0pt \proofbelowshift=0pt
%
% flags to enable inner proof tree to detect if on left:
\onleftofproofruletrue\penalty1
}
\def\eproofbit{% NESTED TWO
%
% various hacks applicable to hypothesis list 
\ifx    \wereinproofbit\prooftree
\then   \ifcase \lastpenalty
        \then   \shortenproofright=0pt  % 0: some other object, no indentation
        \or     \unpenalty\hfil         % 1: empty hypotheses, just glue
        \or     \unpenalty\unskip       % 2: just had a tree, remove glue
        \else   \shortenproofright=0pt  % eh?
        \fi
\fi
%
% pass out crucial values from scope
\global\dimen0=\shortenproofleft
\global\dimen1=\shortenproofright
\global\dimen2=\proofrulebreadth
\global\dimen3=\proofbelowshift
\global\dimen4=\proofdotseparation
\global\count255=\proofdotnumber
%
% end the box
$\egroup  % NESTED ONE
%
% restore the values
\shortenproofleft=\dimen0
\shortenproofright=\dimen1
\proofrulebreadth=\dimen2
\proofbelowshift=\dimen3
\proofdotseparation=\dimen4
\proofdotnumber=\count255
}
\def\proofover{% NESTED TWO
\eproofbit % NESTED ONE
\setbox\proofbelow=\hbox\bgroup % NESTED TWO
\let\wereinproofbit\proofover
$\displaystyle
}%
\def\proofoverdbl{% NESTED TWO
\eproofbit % NESTED ONE
\doubleprooftrue
\setbox\proofbelow=\hbox\bgroup % NESTED TWO
\let\wereinproofbit\proofoverdbl
$\displaystyle
}%
\def\proofoverdots{% NESTED TWO
\eproofbit % NESTED ONE
\proofdotstrue
\setbox\proofbelow=\hbox\bgroup % NESTED TWO
\let\wereinproofbit\proofoverdots
$\displaystyle
}%
\def\proofusing{% NESTED TWO
\eproofbit % NESTED ONE
\setbox\proofrulename=\hbox\bgroup % NESTED TWO
\let\wereinproofbit\proofusing
\kern0.3em$
}
\def\endprooftree{% NESTED TWO
\eproofbit % NESTED ONE
% \dimen0 =     length of proof rule
% \dimen1 =     indentation of conclusion wrt rule
% \dimen2 =     new \shortenproofleft, ie indentation of conclusion
% \dimen3 =     new \shortenproofright, ie
%                space on right of conclusion to end of tree
% \dimen4 =     space on right of conclusion below rule
  \dimen5 =0pt% spread of hypotheses
% \dimen6, \dimen7 = height & depth of rule
%
% length of rule needed by proof above
\dimen0=\wd\proofabove \advance\dimen0-\shortenproofleft
\advance\dimen0-\shortenproofright
%
% amount of spare space below
\dimen1=.5\dimen0 \advance\dimen1-.5\wd\proofbelow
\dimen4=\dimen1
\advance\dimen1\proofbelowshift \advance\dimen4-\proofbelowshift
%
% conclusion sticks out to left of immediate hypotheses
\ifdim  \dimen1<0pt
\then   \advance\shortenproofleft\dimen1
        \advance\dimen0-\dimen1
        \dimen1=0pt
%       now it sticks out to left of tree!
        \ifdim  \shortenproofleft<0pt
        \then   \setbox\proofabove=\hbox{%
                        \kern-\shortenproofleft\unhbox\proofabove}%
                \shortenproofleft=0pt
        \fi
\fi
%
% and to the right
\ifdim  \dimen4<0pt
\then   \advance\shortenproofright\dimen4
        \advance\dimen0-\dimen4
        \dimen4=0pt
\fi
%
% make sure enough space for label
\ifdim  \shortenproofright<\wd\proofrulename
\then   \shortenproofright=\wd\proofrulename
\fi
%
% calculate new indentations
\dimen2=\shortenproofleft \advance\dimen2 by\dimen1
\dimen3=\shortenproofright\advance\dimen3 by\dimen4
%
% make the rule or dots, with name attached
\ifproofdots
\then
        \dimen6=\shortenproofleft \advance\dimen6 .5\dimen0
        \setbox1=\vbox to\proofdotseparation{\vss\hbox{$\cdot$}\vss}%
        \setbox0=\hbox{%
                \advance\dimen6-.5\wd1
                \kern\dimen6
                $\vcenter to\proofdotnumber\proofdotseparation
                        {\leaders\box1\vfill}$%
                \unhbox\proofrulename}%
\else   \dimen6=\fontdimen22\the\textfont2 % height of maths axis
        \dimen7=\dimen6
        \advance\dimen6by.5\proofrulebreadth
        \advance\dimen7by-.5\proofrulebreadth
        \setbox0=\hbox{%
                \kern\shortenproofleft
                \ifdoubleproof
                \then   \hbox to\dimen0{%
                        $\mathsurround0pt\mathord=\mkern-6mu%
                        \cleaders\hbox{$\mkern-2mu=\mkern-2mu$}\hfill
                        \mkern-6mu\mathord=$}%
                \else   \vrule height\dimen6 depth-\dimen7 width\dimen0
                \fi
                \unhbox\proofrulename}%
        \ht0=\dimen6 \dp0=-\dimen7
\fi
%
% set up to centre outermost tree only
\let\doll\relax
\ifwasinsideprooftree
\then   \let\VBOX\vbox
\else   \ifmmode\else$\let\doll=$\fi
        \let\VBOX\vcenter
\fi
% this \vbox or \vcenter is the actual output:
\VBOX   {\baselineskip\proofrulebaseline \lineskip.2ex
        \expandafter\lineskiplimit\ifproofdots0ex\else-0.6ex\fi
        \hbox   spread\dimen5   {\hfi\unhbox\proofabove\hfi}%
        \hbox{\box0}%
        \hbox   {\kern\dimen2 \box\proofbelow}}\doll%
%
% pass new indentations out of scope
\global\dimen2=\dimen2
\global\dimen3=\dimen3
\egroup % NESTED ZERO
\ifonleftofproofrule
\then   \shortenproofleft=\dimen2
\fi
\shortenproofright=\dimen3
%
% some space on right and flag we've just made a tree
\onleftofproofrulefalse
\ifinsideprooftree
\then   \hskip.5em plus 1fil \penalty2
\fi
}
\newcommand{\myemph}[1]{\textbf{#1}}    
\newcommand{\defeq}{=_{\mathrm{def}}}
\newcommand{\type}{\mathsf{type}}       
\newcommand{\nat}{\ensuremath{\mathsf{Nat}}} 
\newcommand{\Id}{\mathsf{Id}}
\newcommand{\id}[1]{\Id_{#1}}
\newcommand{\refl}{\mathsf{refl}}
\newcommand{\idrec}{\mathsf{idrec}}
\newcommand{\W}{\mathsf{W}}
\newcommand{\wsup}{\mathsf{sup}}
\newcommand{\wrec}{\mathsf{wrec}}
\newcommand{\wcomp}{\mathsf{wcomp}}
\newcommand{\Bool}{\mathsf{2}}
\newcommand{\boolrec}{\mathsf{2rec}}
\newcommand{\boolcomp}{\mathsf{2comp}}
\newcommand{\UU}{\mathsf{U}}
\newcommand{\pair}{\mathsf{pair}}
\newcommand{\Palg}{P\text{-}\mathsf{Alg}}
\newcommand{\BoolAlg}{\mathsf{2}\text{-}\mathsf{Alg}}
\newcommand{\app}{\mathsf{app}}
\newcommand{\iscontr}{\mathsf{iscontr}}
\newcommand{\hfiber}{\mathsf{hfiber}}
\newcommand{\wrecs}{\mathsf{simp\textsf{-}wrec}}
\newcommand{\Hint}{\mathcal{H}}
\newcommand{\Hext}{\mathcal{H}_{\mathrm{ext}}}
\newtheorem{theorem}{Theorem}
\newtheorem*{theorem*}{Theorem}
\newtheorem{proposition}[theorem]{Proposition}
\theoremstyle{remark}
\newtheorem{remark}[theorem]{Remark} 
\newtheorem*{remarks*}{Remarks}
\theoremstyle{definition}
\newtheorem{definition}[theorem]{Definition}
\begin{document}
%%%%%%%%%%%%%%%%%%%%%%%%%%%%%%%%%%%%%%%%%%%%%%%%%%%%%%%%%%%%

\title{Inductive Types in Homotopy Type Theory}

\author{Steve Awodey}
\address{Department of Philosophy, Carnegie Mellon University}
\email{awodey@cmu.edu}

\author{Nicola Gambino}
\address{Dipartimento di Matematica e Informatica, 
Universit\`a degli Studi di Palermo}
\email{ngambino@math.unipa.it}

\author{Kristina Sojakova}
\address{School of Computer Science,
Carnegie Mellon University}
\email{kristinas@cmu.edu}

\date{May 2nd, 2012}

\maketitle

%%%
\begin{abstract}
%%%
\noindent Homotopy type theory is an interpretation of Martin-L\"of's constructive type theory into abstract homotopy theory.   There results a link between constructive mathematics and algebraic topology, providing topological semantics for intensional systems of type theory as well as a computational approach to algebraic topology via type theory-based proof assistants such as~Coq.

The present work investigates inductive types in this setting. Modified rules for inductive types, including types of well-founded trees, or W-types, are presented, and the basic homotopical semantics of such types are determined.  Proofs of all results have been formally verified by the Coq proof assistant, and the proof scripts for this verification form an essential component of this research.

\end{abstract}
%

%%%%%%%%%%%%%%%%%%%%%%%
\section*{Introduction}
%%%%%%%%%%%%%%%%%%%%%%%
% The extended abstract must be in English and provide sufficient detail to allow the program
% committee to assess the merits of the paper. It should begin with a succinct statement
% of the issues, a summary of the main results, and a brief explanation of their significance
% and relevance to the conference and to computer science, all phrased for the non-specialist.
% Technical development directed to the specialist should follow. References and comparisons
% with related work must be included.

\noindent 
The constructive type theories introduced by Martin-L\"of are dependently-typed
$\lambda$-calculi with operations for identity types $\id{A}(a,b)$, dependent products $(\Pi x {\, : \,}  A)B(x)$
and dependent sums $(\Sigma x {\, : \,} A)B(x)$,  among others~\cite{MartinLofP:intttp,MartinLofP:conmcp,MartinLofP:inttt,NordstromB:promlt,NordstromB:marltt}.   
These are related to the basic concepts of predicate logic, \emph{viz.}
equality and quantification, via the familiar propositions-as-types correspondence~\cite{HowardWH:foratn}. The different systems introduced by Martin-L\"of over the years 
vary greatly both in proof-theoretic strength~\cite{GrifforE:strsml} and computational
properties. From the computational point of view, it is important to
distinguish between the extensional systems, that have a stronger
notion of equality, but for which type-checking is undecidable, and the intensional ones, that have a
weaker notion of equality, but for which type-checking is decidable~\cite{HofmannM:extcit,MaiettiME:mintlf}. For example, the type theory presented in~\cite{MartinLofP:inttt} is extensional, while that in~\cite{NordstromB:marltt} is 
intensional.

The difference between the extensional and the intensional treatment of equality has a strong 
impact also on the properties of the various types that may be assumed in a type theory, and in particular
on those of inductive types, such as the types of Booleans, natural numbers, lists and W-types~\cite{MartinLofP:conmcp}.  Within extensional type theories, inductive types can be 
characterized (up to isomorphism) as initial algebras of certain definable functors. The initiality condition 
translates directly into a recursion principle that expresses the existence and uniqueness of 
recursively-defined functions. In particular, W-types can be characterized as initial algebras of polynomial functors~\cite{DybjerP:repids,MoerdijkI:weltc}. Furthermore, within extensional type theories, W-types allow us to define a wide range of inductive types, such as the type of natural numbers and types of lists~\cite{DybjerP:repids,GambinoN:weltdp,AbbottM:concsp}.
Within intensional type theories, by contrast, the correspondence between inductive types and initial algebras 
breaks down, since it is not possible to prove the uniqueness of recursively-defined functions. 
Furthermore, the reduction of inductive types like the natural numbers to W-types fails~\cite{DybjerP:repids,GoguenH:inddtw}.

In the present work, we exploit insights derived from the new models of intensional type theory based on 
homotopy-theoretic ideas~\cite{AwodeyS:homtmi,VoevodskyV:notts,vandenBergB:topsmi} to 
investigate inductive types, thus contributing to the new area known as Homotopy Type Theory.
Homotopical intuition justifies the assumption of a limited form of function extensionality, which, 
as we show, suffices to deduce uniqueness properties of recursively-defined functions up to  
homotopy.
Building on this observation, we introduce the notions of \emph{weak algebra homomorphism} and \emph
{homotopy-initial algebra}, which require uniqueness of homomorphisms up to homotopy. We modify the rules for W-types by replacing the definitional equality in the standard
computation rule with its propositional counterpart, yielding a weak form of the corresponding inductive type. 
Our main result is that these new, weak W-types correspond precisely to homotopy-initial algebras of 
polynomial functors.   Furthermore, we indicate how homotopical versions of various inductive types can be defined as special cases of the general construction in the new setting

The work presented here is motivated in part by the Univalent Foundations program formulated by 
Voevodsky~\cite{VoevodskyV:unifp}.  This ambitious program intends to provide comprehensive foundations for mathematics on the basis of homotopically-motivated type theories, with an associated computational implementation in the Coq proof assistant.  The present investigation of inductive types   serves as an example of this new paradigm: despite the fact that the intuitive basis lies in higher-dimensional category theory and homotopy theory, the actual development is strictly syntactic, allowing for direct formalization in Coq.  Proof scripts of the definitions, results, and all necessary preliminaries are provided in a downloadable repository~\cite{AwodeyS:indtht}. 

The paper is organized as follows.  In section \ref{section:prelims}, we describe and motivate the dependent type theory over which we will work and compare it to some other well-known systems in the literature. The basic properties of the system and its homotopical interpretation are developed to the extent required for the present purposes. Section \ref{section:extW} reviews the basic theory of W-types in extensional type theory and sketches the proof that these correspond to initial algebras of polynomial functors; there is nothing new in this section, rather it serves as a framework for the generalization that follows.  Section \ref{section:extW} on intensional W-types contains the development of our new theory; it begins with a simple example, that of the type $\Bool$ of Boolean truth values, which serves to indicate the main issues involved with inductive types in the intensional setting, and our proposed solution.  We then give the general notion of weak W-types, including the crucial new notion of \emph{homotopy-initiality}, and state our main result, the equivalence between the type-theoretic rules for weak W-types and the existence of a homotopy-initial algebra of the corresponding polynomial functor.  
Moreover, we show how some of the difficulties with intensional W-types are remedied in the new setting by showing that the type of natural numbers can be defined as an appropriate W-type.
Finally, we conclude by indicating how this work fits into the larger study of inductive types in Homotopy Type Theory and the Univalent Foundations program generally.

%%%%%%%%%%%%%%%%%%%%%%%
\section{Preliminaries}\label{section:prelims}
%%%%%%%%%%%%%%%%%%%%%%%

\noindent 
The general topic of Homotopy Type Theory is concerned with the study of the constructive type theories of Martin-L\"of under their new interpretation into abstract homotopy theory and higher-dimensional category theory. Martin-L\"of type theories are foundational systems which have been used to formalize large parts of constructive mathematics, and also for the development of high-level programming languages~\cite{MartinLofP:conmcp}.  They are prized for their combination of expressive strength and desirable proof-theoretic properties.  One aspect of these type theories that has led to special difficulties in providing semantics is the intensional character of equality.  In recent work \cite{AwodeyS:homtmi,VoevodskyV:notts,vandenBergB:topsmi,AwodeyS:typth}, it has emerged that the topological notion of \emph{homotopy} provides an adequate basis for the semantics of intensionality.  This extends the paradigm of computability as continuity, familiar from domain theory, beyond the simply-typed 
$\lambda$-calculus to dependently-typed theories involving:\begin{enumerate}[(i)]
\item dependent sums $(\Sigma x\colon\!{A})B(x)$ and dependent products $(\Pi x\colon\!{A})B(x)$, modelled respectively by the total space and the space of sections of the fibration modelling the dependency of $B(x)$ over $ x : A$; \item
and, crucially, including the identity type constructor~$\id{A}(a,b)$, interpreted as the space of all \emph{paths} in~$A$ between points~$a$ and~$b$. \end{enumerate}

In the present work, we build on this homotopical interpretation to study inductive types, such as the natural numbers, Booleans, lists, and W-types. Within extensional type theories, W-types can be used to  provide a constructive counterpart of the classical notion of a well-ordering~\cite{MartinLofP:inttt} and to uniformly define a variety of inductive types~\cite{DybjerP:repids}.
However, most programming languages and proof assistants, such as Coq~\cite{BertotY:inttpp}, Agda~\cite{NorellU:towppl} and Epigram~\cite{McBrideC:viefl} use schematic inductive definitions~\cite{CoquandT:inddt,PaulinMorhringC:inddsc} rather than W-types to define inductive types.  This is due in part to the practical convenience of the schematic approach, but it is also a matter of necessity; these systems are based on intensional rather than extensional type theories, and in the intensional theory the usual reductions of inductive types to W-types fail~\cite{DybjerP:repids,McBrideC:wtygnb}.
Nonetheless, W-types retain great importance from a theoretical perspective, since they allow us to internalize in type theory arguments about inductive types. Furthermore, as we will see in Section~\ref{section:intW}, a limited form of extensionality licensed by the homotopical interpretation suffices to develop the theory of W-types in a satisfactory way. In particular, we shall make use of ideas from higher category theory and homotopy theory to understand W-types as ``homotopy-initial" algebras of an appropriate kind.

%%%%%%%%%%%%%%%%%%%%%%%
\subsection{Extensional vs.\ intensional type theories}

\noindent We work here with type theories that have the four standard forms of judgement
\[
A : \type \, , \quad A = B : \type \, , \quad   a : A \, , \quad a = b : A \, . 
\]
We refer to the equality relation in these judgements as \emph{definitional equality}, 
which should be contrasted with the notion of \emph{propositional equality}
recalled below. 
Such a judgement $J$ can be made also relative to a \emph{context}~$\Gamma$ of variable declarations, a situation that we indicate by writing~$\Gamma \vdash J$. When stating deduction
rules we make use of standard conventions to simplify the exposition, such as omitting the mention
of a context that is common to premisses and conclusions of the rule.
The rules for identity types in intensional type theories are given in~\cite[Section~5.5]{NordstromB:marltt}. We recall them here in a slighly different, but equivalent, formulation.

\begin{itemize}
\item $\Id$-formation rule.
\[
\begin{prooftree}
A :  \type \quad 
a :  A  \quad
b :  A 
\justifies
 \id{A}(a,b) :  \type
 \end{prooftree}
\]
\item $\Id$-introduction rule.
\[
\begin{prooftree}
a :  A 
\justifies
 \refl(a) :  \id{A}(a,a)
 \end{prooftree} 
\]
\item $\Id$-elimination rule.
\[
\begin{prooftree}
\begin{array}{l} 
x, y :  A, u :  \id{A}(x,y) \vdash C(x,y,u) :  \type \\
 x :  A \vdash  c(x) :  C(x,x,\refl(x))  
 \end{array}
\justifies
x, y :  A, u :  \id{A}(x,y) \vdash  \idrec(x,y,u,c) :  C(x,y,u)
\end{prooftree}
\]
\item $\Id$-computation rule.
\[
\begin{prooftree}
\begin{array}{l} 
x, y :  A, u :  \id{A}(x,y) \vdash C(x,y,u) :  \type \\
 x :  A \vdash  c(x) :  C(x,x,\refl(x)) 
 \end{array}
 \justifies
x :  A \vdash \idrec(x,x,\refl(x), c) = c(x) :  C(x, x, \refl(x)) \, .
\end{prooftree}
\]
\end{itemize}

\medskip

As usual, we say that two elements  $a, b :A$ are \emph{propositionally equal} if 
 the type $\Id(a,b)$ is inhabited.
Most work on W-types to date (\emph{e.g.}~\cite{DybjerP:repids,MoerdijkI:weltc,AbbottM:concsp}) has been in the setting of extensional type theories,  
in which the following rule, known as the \emph{identity reflection rule}, is also assumed:

\begin{equation}
\label{equ:collapse}
\begin{prooftree}
 p :  \id{A}(a,b)
  \justifies
  a=b :  A
\end{prooftree}
\end{equation}

This rule collapses propositional equality with definitional equality, thus making the overall system
somewhat simpler to work with. However, it destroys the constructive character of the intensional system, since it makes type-checking undecidable~\cite{HofmannM:extcit}. For this reason, it is not assumed
in the most recent formulations of Martin-L\"of type theories~\cite{NordstromB:marltt} or in automated proof assistants like Coq~\cite{BertotY:inttpp}.

In intensional type theories, inductive types cannot be characterized by standard category-theoretic
universal properties. For instance, in this setting it is not possible to show that there exists a 
definitionally-unique function out of the empty type with rules as in~\cite[Section~5.2]{NordstromB:marltt}, thus making it impossible to prove that the empty type provides an initial object. 
Another consequence of this fact is that, if we attempt to define the type of 
natural numbers as a W-type in the usual way, then 
the usual elimination and computation rules for it are no longer derivable~\cite{DybjerP:repids}. Similarly, it is not possible to show the uniqueness of recursively-defined functions out of W-types. When interpreted categorically, the uniqueness of such functions translates into the initiality property of the associated polynomial functor algebra, which is why the correspondence between W-types and initial algebras fails in the intensional setting.

Due to this sort of poor behaviour of W-types, and other constructions, in the purely intensional setting, that system is often augmented by other extensionality principles that are somewhat weaker than the Reflection rule, such as Streicher's K-rule  or the Uniqueness of Identity Proofs (UIP)~\cite{StreicherT:invitt}, which has recently been reconsidered
in the context of Observational Type Theory \cite{AltenkirchT:obsen}.  Inductive types in such intermediate systems are somewhat better behaved, but still exhibit some undesirable properties, making them less useful for practical purposes than one might wish~\cite{McBrideC:wtygnb}.  Moreover, these intermediate systems seem to lack a clear conceptual basis:  they neither intend to formalize constructive sets (like the extensional theory) nor is there a principled reason to choose these particular extensionality rules, beyond their practical advantages.  

%%%%%%%%%%%%%%%%%%%%%%%%%%%%%%%%
\subsection{The system $\Hint$} 

\noindent We here take a different approach to inductive types in the intensional setting, namely, one motivated by the homotopical interpretation.  It involves working over a dependent type theory $\Hint$ which has the following deduction rules on top of the standard structural rules:
\begin{itemize}
\item rules for identity types as stated above;
\item rules for $\Sigma$-types as in~\cite[Section~5.8]{NordstromB:marltt};
\item rules for $\Pi$-types as in~\cite[Section~3.2]{GarnerR:strdpt}; 
\item the propositional $\eta$-rule for $\Pi$-types, \emph{i.e.} the axiom
asserting that for every $f : (\Pi x : A) B(x)$, the type $\Id(f, 
\lambda x. \app(f,x))$ is inhabited;
\item the Function Extensionality axiom (FE), \emph{i.e.} the axiom asserting that
for every $f, g : A \rightarrow B$, the type
\[
(\Pi x :  A)\id{B}( \app(f, x), \app(g, x)) \rightarrow \id{A \rightarrow B}(f,g) 
\]
is inhabited.
\end{itemize}
Here, we have used the notation $A \rightarrow B$ to indicate function types, defined via
$\Pi$-types in the usual way. Similarly, we will write $A \times B$ to denote the binary product
of two types as usually defined via $\Sigma$-types.
\smallskip

\subsubsection*{Remarks}
\begin{enumerate}[(i)]
\item The rules for $\Pi$-types of $\Hint$ are derivable from those
in~\cite[Section~5.4]{NordstromB:marltt}. For simplicity, 
we will write~$f(a)$ or~$f  a$ instead of $\app(f,a)$. 
\item As shown in~\cite{VoevodskyV:unifc}, the $\eta$-rule for dependent
functions and the function extensionality principle stated above imply
the corresponding function extensionality principle for dependent functions, \emph{i.e.} 
\[
(\Pi x :  A)\id{B(x)}( f x, g x) \rightarrow \id{(\Pi x : A) B(x)}(f,g) \, .
\]
\item The following form of the $\eta$-rule for $\Sigma$-types is derivable:
\[
\begin{prooftree}
c  : (\Sigma x : A)B(x) 
\justifies
\eta_{\Sigma}(c) : \Id(c, \pair( \pi_1 c \, , \pi_2 c)) \, , 
\end{prooftree}
\]
 where $\pi_1$ and $\pi_2$ are the projections. This  can be proved by $\Sigma$-elimination,
without FE.
\item $\Hint$ does \emph{not} include the $\eta$-rules as definitional equalities, either for $\Sigma$-types or for $\Pi$-types (as is done in~\cite{GoguenH:inddtw}).
\item The type theory $\Hint$ will serve as the background theory for our study of 
inductive types and W-types. For this reason, we need not assume it to have any primitive types.
\end{enumerate}

\noindent
This particular combination of rules is motivated by the fact that $\Hint$ has a clear
homotopy-theoretic sematics. Indeed, the type theory~$\Hint$ is a subsystem of the type theory 
used in Voevodsky's Univalent Foundations library~\cite{VoevodskyV:unifc}.  In particular, the 
Function Extensionality axiom is formally implied by Voevodsky's Univalence axiom~\cite{VoevodskyV:notts}, 
which is also valid in homotopy-theoretic models, but will not be needed here. Note that, 
while the Function Extensionality axiom is valid also in set-theoretic models, the Univalence 
axiom is not. Although $\Hint$ has a straightforward set-theoretical semantics, we stress that it 
does not have any global extensionality rules, like the identity reflection rule, K, or UIP. This makes it also compatible with ``higher-dimensional" interpretations such as the groupoid model~\cite{HofmannM:gromtt}, in which the rules of $\Hint$ are also valid.

%%%%%%%%%%%%%%%%%%%%%%%%%%%%%%%%%%%%
\subsection{Homotopical semantics} 

\noindent The homotopical semantics of  $\Hint$ is based on the idea that an identity term~$p:  \id{A}(a,b)$ 
is (interpreted as) a path $p: a\leadsto b$ between the points $a$ and $b$ in the space $A$.   
More generally, the interpretations of terms $a(x)$ and $b(x)$ with free variables will be continuous 
functions into the 
space $A$, and an identity term $p(x) :  \id{A}\big(a(x),b(x)\big)$ is then a 
continuous family of paths, \emph{i.e.}~a homotopy between the continuous functions. Now, the main import of the 
$\Id$-elimination rule is that  type dependency must respect identity, in the following sense: given a dependent type
\begin{equation}
\label{equ:deptype}
x:A \vdash B(x) : \type \, ,
\end{equation} 
and $p: \id{A}(a,b)$, there is then a \emph{transport} function 
 $$p_{\, ! } : B(a) \rightarrow B(b),$$ which is defined by $\Id$-elimination, taking for $x : A$
the function $\refl(x)_{\, !} : B(x) \rightarrow B(x)$ to be the identity on $B(x)$.  Semantically, 
given that an identity term $p: \id{A}(a,b)$ is interpreted as a path $p: a\leadsto b$, 
 this means that a dependent type as in~\eqref{equ:deptype} must be interpreted as a space $B\rightarrow A$, fibered
 over the space $A$,  and that the judgement
  \[
  x,y:A \vdash\id{A}(x,y) : \type
  \] 
  is interpreted as the canonical fibration $A^I \rightarrow A\times A$ 
 of the path space $A^I$ over $A \times A$. For a more detailed overview of the homotopical interpretation, 
 see~\cite{AwodeyS:typth}.

Independently of this interpretation, each type $A$ can be shown to carry the structure of a weak 
$\omega$-groupoid in the sense of~\cite{BataninM:mongcn,LeinsterT:higohc} with the elements of $A$ as objects, identity proofs $p : \id{A}(a,b)$ as morphisms and 
 elements of iterated identity types 
 as~$n$-cells~\cite{vandenBergB:typwg,LumsdaineP:weaci}. Furthermore, $\Hint$ 
 determines a weak $\omega$-category~$\mathcal{C}(\Hint)$ having types as 0-cells, elements $f : A \rightarrow B$ as 1-cells, and elements of (iterated) identity types 
as~$n$-cells~\cite{Lumsdaine:higcft}.   The relation between the weak $\omega$-category structure of~$\mathcal{C}(\Hint)$ and the homotopical interpretation of intensional type theories closely mirrors that between higher category theory and homotopy theory in modern algebraic topology, and some methods developed in the latter setting are also applicable in type theory.  For instance,
 the topological notion of contractibility admits the following type-theoretic counterpart, originally
 introduced by Voevodsky in~\cite{VoevodskyV:unifc}.

\begin{definition}  A type $A$ is called \emph{contractible} if the  type 
 \begin{equation}
 \label{eq:contractible}
\iscontr(A) \defeq (\Sigma x:A)(\Pi y:A)\id{A}(x,y)
\end{equation}
is inhabited.
\end{definition} 

The type $\iscontr(A)$ can be seen as the propositions-as-types translation
of the formula stating that $A$ has a unique element. However, its homotopical interpretation 
is as a space that is inhabited if and only if the space interpreting $A$ is contractible in the usual
topological sense. The notion of contractibility can be used to articulate the world of types into different homotopical dimensions, or \emph{h-levels} \cite{VoevodskyV:unifc}. This classification has proven to be quite useful in understanding intensional type theory.  
For example, it permits the definition of new notions of \emph{proposition} and \emph{set} which provide a useful alternative to the standard approach to formalization of mathematics in type theory~\cite{VoevodskyV:unifc}.

\begin{remark} \label{thm:idcontrcontr}
If $A$ is a contractible type, then for every $a, b : A$, the type $\id{A}(a,b)$ is again contractible. This can be proved  by $\Id$-elimination~\cite{AwodeyS:indtht}. 
\end{remark}

Let us also recall from~\cite{VoevodskyV:unifc} the notions of weak equivalence and homotopy equivalence. To do this, we need to fix some notation. For $f : A \rightarrow B$
and $y : B$, define the type
\[
 \hfiber(f,y) \defeq (\Sigma x : A) \id{B}(f x, y) \, .
\]
We refer to this type as the \emph{homotopy fiber} of $f$ at $y$. 

%%%
\begin{definition} \label{thm:weq} Let $f : A \rightarrow B$.
\begin{itemize}
\item We say that $f$ is a \emph{weak equivalence} if  the type
\[
\mathsf{isweq(f)} \defeq (\Pi y : B) \,  \iscontr(\hfiber(f,y)) 
\]
is inhabited. 
\item We say that $f$ is a \emph{homotopy equivalence} if there exist a function 
$g : B\rightarrow A$ and elements
\begin{align*}
\eta &: (\Pi x : A) \Id( g  f  x , x) \,  ,\\
\varepsilon &: (\Pi y:B) \Id( f   g  y, y) \, .
\end{align*}
It is an \emph{adjoint homotopy equivalence} if there are also
terms
\begin{align*}
p &: (\Pi x : A) \Id ( \varepsilon_{f x} \, , f \, \eta_x )  \, , \\
q &: (\Pi y : B) \Id ( \eta_{g y} \, , g \, \varepsilon_y) \, ,
\end{align*}
where the same notation for both function application and
the action of a function on an identity proof (which is easily definable by $\Id$-elimination),
and we write $\alpha_x$ instead of $\alpha(x)$ for better readability.
\end{itemize}
\end{definition}
%%%

The type $\mathsf{isweq(f)}$ can be seen as the propositions-as-types translation of the formula asserting that $f$ is bijective, while homotopy equivalence is evidently a form of isomorphism. Thus it is a pleasant fact that a function is a weak equivalence if and only if it is a homotopy equivalence~\cite{VoevodskyV:unifc}. 
We also note that all type-theoretic constructions are homotopy invariant, in the sense that they respect this relation of equivalence, a fact which is exploited by the Univalence axiom~\cite{VoevodskyV:notts}.

\medskip

In Section \ref{section:intW} below, these and related homotopy-theoretic insights will be used to study inductive types, but first we must briefly review some basic facts about  inductive types in the extensional setting.

%%%%%%%%%%%%%%%%%%%%%%%
\section{Extensional W-types}\label{section:extW}
%%%%%%%%%%%%%%%%%%%%%%%

\noindent 
We briefly recall the theory of W-types in fully extensional type theories. Let us begin by recalling
the rules for W-types from~\cite{MartinLofP:inttt}. To state them more conveniently, we sometimes
write~$W$ instead of $(\W x : A) B(x)$. 

\smallskip

\begin{itemize}
\item $\W$-formation rule.
\[
\begin{prooftree}
 A : \type \qquad
 x : A \vdash B(x) : \type
 \justifies
 (\W x : A) B(x) : \type
\end{prooftree}
\]
\item $\W$-introduction rule.
\[
\begin{prooftree}
a:A \qquad
t : B(a) \rightarrow W
\justifies
\wsup(a, t): W
\end{prooftree}
\]
\item $\W$-elimination rule.
\[
 \begin{prooftree}
 \hspace{-1ex} \begin{array}{l}
 w : W \vdash C(w) : \type \\ 
 x:A \, , u:  B(x) \rightarrow W \, ,  v : (\Pi y : B(x)) C(u(y))  \vdash \\ 
   \qquad c(x,u,v) : C(\wsup(x,u))
   \end{array}
 \justifies
w : W \vdash    \wrec(w,c) : C(w) 
\end{prooftree}
\]
\item $\W$-computation rule.
\[
 \begin{prooftree}
 \hspace{-1ex} \begin{array}{l}
 w : W \vdash C(w) : \type \\ 
 x:A \, , u:  B(x) \rightarrow W \, ,  v : (\Pi y : B(x)) C(u(y))  \vdash \\ 
   \qquad c(x,u,v) : C(\wsup(x,u))
   \end{array}
 \justifies
 \begin{array}{l} 
x : A, u : B(x) \rightarrow W \vdash 
\wrec(\wsup(x,u), c) = \\ 
  \qquad c(x,u, \lambda y. \wrec(u(y), c)) : C(\wsup(x,u)) \, .
\end{array}
\end{prooftree}
\]
\end{itemize}

\noindent

\medskip

W-types can be seen informally as the free algebras for signatures
with operations of possibly infinite arity, but no equations. Indeed, the premisses 
of the formation rule above can be thought of as specifying a signature that has the elements of~$A$ 
as operations and in which the arity of~ $a : A$ is the cardinality of the type~$B(a)$. Then, the introduction rule specifies the canonical way of forming an element of the free algebra, and the elimination rule can be seen as the propositions-as-types translation of the appropriate induction principle.

In extensional type theories, this informal description can easily be turned into a precise
mathematical characterization. To do so, let us use the theory $\Hext$ obtained
by extending $\Hint$ with the reflection rule in~\eqref{equ:collapse}. Let $\mathcal{C}(\Hext)$ be the category with
types as objects and elements $f : A \rightarrow B$ as maps, in which two maps are
considered equal if and only if they are definitionally equal. The premisses of the introduction
rule determines the \emph{polynomial endofunctor} $P : \mathcal{C}(\Hext) \rightarrow \mathcal{C}(\Hext)$
defined by 
\[
    P(X) \defeq (\Sigma x : A ) (B(x) \rightarrow X) \, .
\]
A $P$-algebra is a pair consisting of a type $C$ and a function $s_C : PC \rightarrow C$, called 
the structure map of the algebra. The formation rule gives us an object $W \defeq (\W x : A)
B(x)$ and the introduction rule (in combination with the rules for $\Pi$-types
and $\Sigma$-types) provides a structure map
\[
s_W : PW \rightarrow W \, .
\]
The elimination rule, on the other hand, states that in order for the projection $\pi_1 \colon C \rightarrow W$, where
$C \defeq (\Sigma w {\, : \, } W) C(w)$, to have a section $s$, as in the diagram
\[
\xymatrix{
& C  \ar[d]^{\pi_1} \\
W \ar[ru]^{s} \ar[r]_{1_W} & W, 
}
\]
it is sufficient for the type $C$ to have a $P$-algebra structure over $W$. Finally, the computation rule states that the section $s$ given by the elimination rule is also a $P$-algebra homomorphism. 

The foregoing elimination rule implies what we call the \emph{simple} $\W$-elimination rule:
\[
\begin{prooftree}
C : \type  \qquad
 x : A, v : B(x) \rightarrow C \vdash c(x,v) : C
 \justifies
w : W \vdash \wrecs(w,c) :  C
\end{prooftree}
\]
This can be recognized as a recursion principle for maps from $W$ into $P$-algebras, since
the premisses of the rule describe exactly a type $C$ equipped with a structure map $s_C 
: PC  \rightarrow C$. For this special case of the elimination rule, the corresponding computation rule again states that the function
\[
\lambda w. \wrecs(w,c) : W \rightarrow C \, ,
\] 
where $c(x,v) = s_C(\pair(x,v))$ for $x : A$ and $v : B(x) \rightarrow C$, is a $P$-algebra homomorphism.
Moreover, this homomorphism can then be shown to be definitionally unique using the elimination
rule, the principle of function extensionality and the reflection rule.  The converse implication also holds: one can derive the general $\W$-elimination rule from the simple elimination rule and the following $\eta$-rule
\begin{equation*}
\begin{prooftree}
\begin{array}{l}
C : \type  \qquad w : W \vdash h(w) : C \\ 
x : A, v : B(x) \rightarrow C \vdash c(x,v) : C\\
x:A \, , u:  B(x) \rightarrow W  \vdash h\left(\wsup(x,u)) = c(x,\lambda y. hu(y)\right) : C
  \end{array}
 \justifies
w : W \vdash  h(w) =  \wrecs(w,c) :  C
\end{prooftree}
\end{equation*}
stating the uniqueness of the $\wrecs$ term among algebra maps. 
Overall, we therefore have that  in 
$\Hext$ induction and recursion are interderivable: 
\[
\begin{array}{ccc}
\text{\underline{\myemph{Induction}}} & \Leftrightarrow & \text{\underline{\myemph{Recursion}}}\\[1ex]
\text{$\W$-elimination} & & \text{Simple $\W$-elimination}\\
\text{$\W$-computation} &&  \text{Simple $\W$-computation + $\eta$-rule} 
\end{array}
\]

Finally, observe that what we are calling recursion is equivalent to the statement that the
type $W$, equipped with the structure map $s_W : PW \rightarrow W$ 
is the initial $P$-algebra. Indeed, assume the simple elimination rule, the simple computation
rule and the $\eta$-rule; then for any $P$-algebra $s_C : PC\rightarrow C$, there is a 
function $f : W \rightarrow C$ by the simple elimination rule, which is a homomorphism by the computational 
rule, and is the unique such homomorphism by the $\eta$-rule.  The converse implication from initiality to recursion is just as direct. Thus, in the extensional theory, to have an initial algebra for the endofunctor $P$ is the same thing
as having a type~$W$ satisfying the introduction, elimination and computation rules above.  Section~\ref{section:intW} will be devoted to generalizing this equivalence to the setting of Homotopy Type Theory.

%%%%%%%%%%%%%%%%%%%%%%%%%%%%%%%%%%%%%
\subsection{Inductive types as W-types}

\noindent To conclude our review, recall that in extensional type theory, many inductive types can be reduced to W-types.  We mention the following  examples, among many others (see \cite{MartinLofP:inttt}, \cite{DybjerP:repids}, \cite{GoguenH:inddtw}, \cite{MoerdijkI:weltc}, \cite{GambinoN:weltdp}, \cite{AbbottM:concsp}):
\begin{enumerate}
\item \emph{Natural numbers}. \label{extnatW}
The usual rules for $\nat$ as an inductive type can be derived from its formalization as the following W-type. Consider the signature with two operations, one of which has arity $0$ and one of which has arity $1$; it is presented type-theoretically by a dependent type with corresponding polynomial functor (naturally isomorphic to)
\[
P(X) = \mathsf{1} + X \, ,
\]
and the natural numbers $\nat$ together with the canonical element $0:\nat$ and the successor function $s : \nat\rightarrow\nat$ form an initial $P$-algebra
\[
(0, s) : \mathsf{1} + \nat \rightarrow \nat\, .
\]
\item \emph{Second number class.}
As shown in~\cite{MartinLofP:inttt}, the second number class can be obtained as a W-type determined by the polynomial functor 
\[
P(X) = \mathsf{1} + X + (\nat \rightarrow X) \, .
\]
This has algebras with three operations, one of arity $0$, one of arity $1$, and one of arity (the cardinality of) $\nat$.
%
%\item \emph{Lists.}  The type $\List(A)$ of finite lists of elements of type $A$ can be built as a W-type determined by the polynomial functor 
%\[
%P(X) = \mathsf{1} + A\!\times\! X \, .
%\]
%We refer to \cite{xxx} % need a reference here !
% for details.
\end{enumerate}

\smallskip

%%%%%%%%%%%%%%%%%%%%%%%%%
\section{Intensional W-types}\label{section:intW}
%%%%%%%%%%%%%%%%%%%%%%%%%

\noindent We begin with an example which serves to illustrate, in an especially simple case, some aspects of our theory.  The type of Boolean truth values is not a W-type, but it can be formulated as an inductive type in the familiar way by means of  formation, introduction, elimination, and computation rules.  It then has an ``up to homotopy" universal property of the same general kind as the one that we shall formulate in section \ref{subsection:main} below for W-types, albeit in a simpler form.

%%%%%%%%%%%%%%%%%%%%%%%%%%
\subsection{Preliminary example}\label{subsection:prelimex}

\noindent The standard rules for the type $\Bool$ given in~\cite[Section~5.1]{NordstromB:marltt}
can be stated equivalently as follows.

\begin{itemize}
\item $\Bool$-formation rule.
\[
 \Bool : \type \, .
 \]
\item $\Bool$-introduction rules.
\[
0 : \Bool \, ,  \qquad  1 : \Bool \, .
\]
\item $\Bool$-elimination rule.\smallskip
\[
\begin{prooftree}
x : \Bool \vdash C(x) : \type \qquad
c_0 : C(0) \qquad
c_1 : C(1) 
\justifies
x: \Bool \vdash \boolrec(x, c_0, c_1) : C(x) 
\end{prooftree}
\]
\item $\Bool$-computation rules. 
\begin{equation*}
\begin{prooftree}
x : \Bool \vdash C(x) : \type \qquad
c_0 : C(0) \qquad
c_1 : C(1) 
\justifies
\left\{
\begin{array}{c} 
 \boolrec(0, c_0, c_1)  =  c_0 : C(0)  \, , \\
 \boolrec(1, c_0, c_1)  =  c_1 : C(1) \, .
 \end{array}
\right.
\end{prooftree}
 \end{equation*} 
\end{itemize}

Although these rules are natural ones to consider in the intensional setting, they do not imply a strict universal property. For example, given a type $C$ and elements $c_0, c_1 : 
C$, the function $\lambda x . \boolrec(x, c_0,c_1) : \Bool \rightarrow C$ cannot  be shown to be definitionally unique among the functions $f :  
\Bool \rightarrow C$ with the property that $f(0)=c_0 : C$ and $f(1)=c_1 : C$.  
The best that one can do by using $\Bool$-elimination over a suitable identity type, and function extensionality, is to show that it is unique among all such maps up to an identity term, which itself is unique up to a higher identity, which in turn is unique up to \ldots. This sort of weak $\omega$-universality, which apparently involves infinitely much data, can nonetheless be captured directly within the system of type theory (without resorting to coinduction) using ideas from higher category theory. To do so, let us define a \emph{$\Bool$-algebra} to be a type $C$ equipped with two elements
$c_0 \, , c_1 : C$. Then, a \emph{weak homomorphism} of $\Bool$-algebras
$(f, p_0, p_1) : (C, c_0,c_1)\rightarrow (D,d_0,d_1)$  
consists of a  function $f : C\rightarrow D$ together with identity terms 
\[
p_0 :  \id{D}(f (c_0) ,d_0) \, , \qquad p_1 :   \id{D}(f (c_1), d_1) \, .
\]
This is a \emph{strict homomorphism} when $f (c_0) = d_0 : D$, $f (c_1) = d_1 : D $ and the identity terms $p_0$ and $p_1$ are the corresponding reflexivity terms.  We can then define the type of weak homomorphisms from $(C, c_0,c_1)$ to $(D,d_0,d_1)$ by letting
\begin{multline*}
\BoolAlg [ (C, c_0,c_1), (D,d_0,d_1) \big] \defeq \\
 (\Sigma f: C \rightarrow D) \Id(f (c_0), d_0) \times\id{D}(f (c_1), d_1) \, .
\end{multline*}
The weak universality condition on the $\Bool$-algebra $(\Bool, 0, 1)$ that we seek can now be determined as follows. 

\begin{definition} \label{thm:boolhinitial}
A $\Bool$-algebra $(C, c_0,c_1)$ is \emph{homotopy-initial} if for any $\Bool$-algebra $(D,d_0,d_1)$, the type 
\[
\BoolAlg \big[ (C, c_0,c_1), (D,d_0,d_1)\big]
\] 
is contractible.
\end{definition}

\noindent The notion of homotopy initiality, or h-initiality for short, captures in a precise way the informal idea that there is essentially one weak algebra homomorphism $(\Bool, 0, 1) \rightarrow (C,c_0,c_1)$.  Moreover, h-initiality can be shown to follow from the rules of inference for $\Bool$ stated above.  Indeed, the  computation rules for $\Bool$ stated above
 evidently make the function
\[
\lambda x . \boolrec(x, c_0,c_1) : \Bool \rightarrow C
\]
into a \emph{strict} algebra map, a stronger condition than is required for h-initiality.  Relaxing these 
definitional equalities to propositional ones, we arrive at the following rules.

\smallskip

\noindent
\begin{itemize}
\item Propositional $\Bool$-computation rules.
\[
\begin{prooftree}
x : \Bool \vdash C(x) : \type \qquad
c_0 : C(0) \qquad
c_1 : C(1) 
\justifies
\left\{
\begin{array}{c} 
\boolcomp_0(c_0, c_1) :  \id{C(0)} \big(\boolrec(0, c_0, c_1), c_0)  \, , \\ 
\boolcomp_1(c_0, c_1)  : \id{C(1)} \big(\boolrec(1, c_0, c_1), c_1)  \, .
\end{array}
\right.
\end{prooftree}
\]
\end{itemize}

\smallskip

This variant is not only still sufficient for h-initiality, but also necessary, as we state precisely in the following.

\begin{proposition} \label{prop:2hinitial}
Over the type theory $\Hint$, the formation, introduction, elimination, and propositional computation rules
for $\Bool$ are equivalent to the existence of a homotopy-initial $\Bool$-algebra.
\end{proposition}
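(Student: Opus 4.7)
The plan is to prove the two directions of the equivalence separately.

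For the direction from the rules to h-initiality, I take $(\Bool, 0, 1)$ as the algebra and, given any $\Bool$-algebra $(C, c_0, c_1)$, exhibit a center of contraction for the hom-type. The natural candidate is
\[
(\lambda x.\,\boolrec(x, c_0, c_1),\ \boolcomp_0(c_0, c_1),\ \boolcomp_1(c_0, c_1)),
\]
whose three components are produced by the elimination and propositional computation rules applied to the constant family $C$. For contractibility, given an arbitrary $(f, p_0, p_1)$, I would first apply $\Bool$-elimination to the family $x : \Bool \vdash \Id(f\, x, \boolrec(x, c_0, c_1)) : \type$, supplying $p_0 \cdot \boolcomp_0(c_0, c_1)^{-1}$ at $0$ and the analogous term at $1$. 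Invoking FE lifts the resulting pointwise identification to a path $\phi$ between the function components, and the full triple-level identification then amounts, by the characterization of equality in $\Sigma$-types, to verifying that transport of $(p_0, p_1)$ along $\phi$ equals $(\boolcomp_0, \boolcomp_1)$. The choice of $\phi$ makes this a short path-algebra computation, provided the canonical map from function-equalities to pointwise identifications is a retraction of the FE term---a standard consequence of FE combined with the propositional $\eta$-rule.

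For the reverse direction, suppose $(W, w_0, w_1)$ is an h-initial $\Bool$-algebra, and declare $\Bool := W$, $0 := w_0$, $1 := w_1$. Formation and introduction are immediate. For elimination, given $C : W \to \type$ and $c_0 : C(w_0)$, $c_1 : C(w_1)$, I would equip $(\Sigma x : W) C(x)$ with the $\Bool$-algebra structure $(\pair(w_0, c_0), \pair(w_1, c_1))$ and use h-initiality to obtain a weak homomorphism $(\phi, \psi_0, \psi_1)$ into it. Post-composing with $\pi_1$ gives a weak endo-homomorphism of $(W, w_0, w_1)$ which, by h-initiality, must be propositionally equal to the identity homomorphism $(1_W, \refl(w_0), \refl(w_1))$. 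Extracting the function-component path yields $H_1 : \Id(\pi_1 \circ \phi, 1_W)$, and instantiating at $x$ gives $H_{1,x} : \Id(\pi_1(\phi(x)), x)$. I then define $\boolrec(x, c_0, c_1)$ to be the transport of $\pi_2(\phi(x))$ across $C$ along $H_{1,x}$, which lives in $C(x)$ as required.

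For the propositional computation rules, I would decompose $\psi_0 : \Id(\phi(w_0), \pair(w_0, c_0))$ via the derived $\Sigma$-identity lemma into a base path $\alpha_0$ identifiable with $\pi_1 \cdot \psi_0$ and a fiber path witnessing that transport of $\pi_2(\phi(w_0))$ along $\alpha_0$ yields $c_0$. The key remaining step is to identify $H_{1, w_0}$ with $\pi_1 \cdot \psi_0$: this follows from the second component of the $\Sigma$-level equality of weak homomorphisms, which witnesses that transport of $\pi_1 \cdot \psi_0$ along $H_1$ produces $\refl(w_0)$, so that short path algebra delivers $\Id(H_{1, w_0}, \pi_1 \cdot \psi_0)$. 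Substituting this identification into the definition of $\boolrec$ yields $\boolcomp_0(c_0, c_1)$, and the argument for $\boolcomp_1$ is symmetric. The main obstacle, as is typical when deriving dependent elimination from a non-dependent universal property, lies in this reverse direction: one must carefully track how the $\Sigma$-characterization of equality of weak homomorphisms converts the higher coherence between the two endo-homomorphisms of $W$ into the propositional identity relating $H_1$ to the structure data $\psi_0, \psi_1$.
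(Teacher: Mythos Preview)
Your proposal is correct and follows essentially the same route as the paper's proof sketch: in the forward direction you use $\Bool$-elimination over an identity type plus function extensionality to establish contractibility of the hom-type, and in the reverse direction you pass to the total algebra on $(\Sigma x{:}W)C(x)$, compare $\pi_1 \circ \phi$ with the identity endo-homomorphism via h-initiality, and transport $\pi_2 \circ \phi$ along the resulting identification. Your account is in fact more detailed than the paper's, which dispatches the propositional computation rules as ``a rather lengthy calculation''; your use of the $\Sigma$-identity characterization and the second (coherence) component of the homomorphism equality to align $H_{1,w_0}$ with $\mathsf{ap}_{\pi_1}(\psi_0)$ is exactly the content of that calculation.
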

\begin{proof}[Proof sketch] 
Suppose we have a type $\Bool$ satisfying the stated rules.  Then clearly $(\Bool, 0, 1)$ is a $\Bool$-algebra; to show that it is h-initial, take any $\Bool$-algebra $(C,c_0,c_1)$.  By elimination with respect to the constant family $C$ and the elements $c_0$ and $c_1$, we have the map $\lambda x . \boolrec(x, c_0,c_1) : \Bool \rightarrow C$, which is a weak algebra homomorphism by the propositional computation rules.  Thus we obtain a term $h:\BoolAlg\big[ (\Bool, 0, 1), (C, c_0,c_1)\big]$.  Now given any $k:\BoolAlg\big[ (\Bool, 0, 1), (C, c_0,c_1)\big]$, we need a term of type $\id{}(h,k)$.  This term follows from a propositional $\eta$-rule, which is derivable by $\Bool$-elimination over a suitable identity type.

Conversely, let $(\Bool, 0, 1)$ be an h-initial $\Bool$-algebra.  To prove elimination, let $x:\Bool \vdash C(x):\type$ with $c_0 : C(0)$ and $ c_1 : C(1)$ be given, and consider the $\Bool$-algebra $(C', c'_0, c'_1)$ defined by:
\begin{align*}
C' &\defeq (\Sigma x: \Bool)C(x) \, , \\
c'_0 &\defeq \pair(0, c_0) \, , \\
c'_1 &\defeq \pair(1, c_1)\, .
\end{align*}
Since $\Bool$ is h-initial, there is a map $r : \Bool\rightarrow C'$ with identities $p_0:\id{}(r  0, c'_0)$ and $p_1:\id{}(r  1, c'_1)$.  Now, we would like to set 
$$\boolrec(x, c_0, c_1) = \pi_2 (r x) : C(x),$$
 where $\pi_2$ is the second projection from $C'=(\Sigma x: \Bool)C(x)$.  But recall that in general 
 $\pi_2(z) : C(\pi_1(z))$, and so (taking the case $x=0$) we have $\pi_2(r   0) : C(\pi_1(r   0))$ rather than the required $\pi_2(r   0) {\, : \, } C(0)$; that is, since it need not be that $\pi_1(r   0) = 0$, the term $\pi_2(r   0)$ has the wrong type to be $\boolrec(0, c_0, c_1)$.  However, we can show that $$\pi_1: (\Sigma x: \Bool)C(x)\rightarrow \Bool$$ is a weak homomorphism, so that the composite $\pi_1\circ r : (\Bool, 0, 1)\rightarrow (\Bool, 0, 1)$ must be propositionally equal to the identity homomorphism $1_\Bool : (\Bool, 0, 1)\rightarrow (\Bool, 0, 1)$, by the contractibility of $\BoolAlg \big[ (\Bool, 0, 1), (\Bool, 0, 1)\big]$.  Thus there is an identity term $p : \id{}(\pi_1\circ r, 1_\Bool)$, along which we can transport using $p_! : C(\pi_1( r   0)) \rightarrow C(0)$, thus taking $\pi_2(r   0 ) : C(\pi_1 (r   0))$ to the term  $p_! ( \pi_2( r   0) ) :C(0)$ of the correct type.  We can then set
\[
\boolrec(x, c_0, c_1) = p_! (\pi_2 (r x)) : C(x)
\]
to get the required elimination term.  The computation rules follow by a rather lengthy calculation.
\end{proof}

Proposition~\ref{prop:2hinitial} is the analogue in Homotopy Type Theory of the characterization of 
$\Bool$ as a strict coproduct $1+1$ in extensional type theory. It makes precise the rough idea that, 
in intensional type theory, $\Bool$ is a kind of homotopy coproduct or weak $\omega$-coproduct 
in the weak $\omega$-category $\mathcal{C}(\Hint)$ of types, terms, identity terms, higher identity terms, \ldots.  
It is worth emphasizing that h-initiality is a purely type-theoretic notion; despite having an obvious semantic interpretation, it is formulated in terms of inhabitation of specific, definable types.  Indeed, Proposition~\ref{prop:2hinitial} and its proof have been completely formalized in the Coq proof assistant~\cite{AwodeyS:indtht}.  

\begin{remark} A development entirely analogous to the foregoing can be given for the type
$\nat$ of natural numbers. In somewhat more detail, one introduces the notions of a $\nat$-algebra and 
of a weak homomorphism of $\nat$-algebras. Using these, it is possible to define the notion of
a homotopy-initial $\nat$-algebra, analogue to that of a homotopy-initial
$\Bool$-algebra in Definition~\ref{thm:boolhinitial}. With these definitions in place, one can prove an equivalence between the formation, introduction, elimination and propositional computation rules for $\nat$ and the existence of a homotopy-initial $\nat$-algebra. 
Here, the propositional computation rules are formulated like those above, \emph{i.e.} by replacing the definitional equalities in the conclusion of the usual computation rules~\cite[Section~5.3]{NordstromB:marltt} with propositional equalities. 
 We do not pursue this further here, however, since $\nat$ can also be presented as a W-type, as we discuss in section \ref{subsec:define} below.
\end{remark}

%%%%%%%%%%%%%%%%%%%%%%%%%%
\subsection{The main theorem}\label{subsection:main}

\noindent Although it is more elaborate to state (and difficult to prove) owing to the presence of 
recursively generated data, our main result on  W-types is analogous to 
the foregoing example in the following respect: rather than being strict initial algebras, as in the 
extensional case, weak W-types are instead homotopy-initial algebras.  This fact can again be stated 
entirely syntactically, as an equivalence between two sets of rules:  the 
formation, introduction, elimination, and propositional computation rules (which we spell
out below) for W-types, and the existence of an h-initial algebra, in the appropriate sense.  Moreover, as in the simple case of the type $\Bool$, the proof of the equivalence is again entirely constructive. 

The required definitions in the current setting are as follows. Let us assume that
\[
x:A \vdash B(x) : \type \, ,
\]
and define the associated polynomial functor as before: 
\begin{equation}
\label{eq:polyfunc}
PX = (\Sigma x : A) (B(x) \rightarrow X) \, .
\end{equation}
(Actually, this is now functorial only up to propositional equality, but this change makes no difference in what follows.)
By definition, a $P$-algebra is a type $C$ equipped a function
$s_C :  PC \rightarrow C$. For $P$-algebras $(C,s_C)$ and $(D,s_D)$, a \emph{weak 
homomorphism} between them $(f, s_f) : (C, s_C) \rightarrow (D, s_D)$
consists of a function $f : C \rightarrow D$ and an identity proof
\[
s_f : \id{PC \rightarrow D}\big( f \circ s_C \, ,  s_{D} \circ Pf \big) \, ,
\]
where $Pf : PC\rightarrow PD$ is the result of the easily-definable action of $P$ on $f: C \rightarrow D$. Such an algebra homomorphism can be represented suggestively in the form:
\[
\xymatrix{
 PC \ar[d]_{s_C} \ar[r]^{Pf}  \ar@{}[dr]|{s_f} &  PD \ar[d]^{s_D}\\
C \ar[r]_{f}   & D }
\] 
Accordingly, the type of weak algebra maps is defined by
\begin{multline*}
\Palg
\big[ (C,s_C), (D, s_D)  \big]
 \defeq   \\
(\Sigma f:  C \rightarrow D) \, \Id(f\circ s_C, s_D\circ Pf) \, .
\end{multline*}

\begin{definition} 
A $P$-algebra $(C, s_C)$ is \emph{homotopy-initial} if for every $P$-algebra $(D, s_D)$, the type 
$$\Palg \big[ (C, s_C), (D, s_D) \big]$$ of weak algebra maps is contractible.
\end{definition}

\begin{remark} 
The notion of h-initiality captures a universal property in which the usual conditions of existence and uniqueness  are replaced by conditions of existence and uniqueness up to a system of higher and higher identity proofs. To explain this, let us fix a $P$-algebra  $(C,s_C)$ and assume that it is homotopy-initial. Then, given any 
$P$-algebra~$(D,s_D)$, there is a weak homomorphism $(f,s_f) : (C,s_C) \rightarrow (D,s_D)$, since
the type of weak maps from $(C,s_C)$ to $(D,s_D)$, being contractible, is inhabited. Furthermore, for any weak map $(g,s_g) : (C,s_C) \rightarrow (D,s_D)$, the
contractibility of the type of weak maps  implies that there is an identity proof 
\[
 p  : \Id\big( (f,s_f), (g, s_g) \big) \, , 
\]
witnessing the uniqueness up to propositional equality of the homomorphism $(f,s_f)$. But it
is also possible to prove that the identity proof $p$ is unique up to propositional equality. Indeed, since 
$(f,s_f)$ and $(g,s_g)$ are elements of a contractible type, the identity type $\Id( (f,s_f), (g, s_g) )$ 
is also contractible, as observed in Remark~\ref{thm:idcontrcontr}. Thus, if we have another 
identity proof $q : \Id( (f,s_f), (g, s_g) )$, there will be an identity term $\alpha : \Id(p,q)$, which is again
essentially unique, and so on.  It should also be pointed out that, just as strictly initial algebras are unique
up to isomorphism, h-initial algebras are unique up to weak equivalence. It then follows from
the Univalence axiom that two h-initial algebras are propositionally equal, a fact that we mention only by the way. Finally, we note that there is also a homotopical version of Lambek's Lemma, asserting that the structure map of an h-initial algebra is itself a weak equivalence, making the algebra a \emph{homotopy fixed point} of the associated polynomial functor. The reader can work out the details from the usual proof and the definition of  h-initiality, or consult~\cite{AwodeyS:indtht}.
\end{remark}

\medskip

The deduction rules that characterize homotopy-initial algebras are
obtained from the formation, introduction, elimination and computation rules for W-types 
stated in Section~\ref{section:extW} by simply replacing the $\W$-computation rule with the
following rule, that we call the propositional $\W$-computation rule.

\smallskip

\begin{itemize} 
\item Propositional $\W$-computation rule.
\[
\begin{prooftree}
\begin{array}{l} 
w : W \vdash C(w) : \type \\ 
\hspace{-1ex}
\begin{array}{c} 
x : A, u : B(x) \rightarrow W, v : (\Pi y : B(x)) C(u(y)) \vdash \\ 
c(x,u,v) : C(\wsup(x,u))  
\end{array}
\end{array}
\justifies
\begin{array}{l} 
x : A, u : B(x) \rightarrow W \vdash 
\wcomp(x,u,c) :  \\
\qquad \Id
\big(
\wrec(\wsup(x,u), c), c(x,u,\lambda y.\wrec( u(y), c )
\big)
\end{array}
\end{prooftree}
\]
\end{itemize}

 \begin{remark}\label{thm:wtypesinvariance}
One interesting aspect of this group of rules, to which we shall refer as the \emph{rules for homotopical
W-types}, is that, unlike the standard rules for W-types, they are invariant under propositional
equality. To explain this  more precisely, let us work in a type theory with a type universe $\UU$ closed
under all the forms of types of $\Hint$ and W-types. Let $A : \UU$, $B : A
\rightarrow \UU$ and define $W \defeq (\W x : A) B(x)$. The invariance of
the rules for homotopy W-types under propositional equality can now be expressed by saying that if we have a type $W' : \UU$  and an identity proof $p : \id{U}(W, W')$, then the $\Id$-elimination rule 
implies that  $W'$ satisfies the same rules as $W$, in the sense that there are definable terms playing the role of the primitive  constants that
appear in the rules for $W$.
\end{remark}

We can now state our main result. Its proof has been formalized in the Coq system,
and the proof scripts are available at~\cite{AwodeyS:indtht}; thus we provide only an outline of the proof. 

\begin{theorem}\label{theorem:main}
Over the type theory $\Hint$, 
the rules for homotopical W-types
are equivalent to the existence of homotopy-initial algebras for polynomial functors.
\end{theorem}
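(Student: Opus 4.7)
The plan is to follow the template set by Proposition~\ref{prop:2hinitial}, scaled up to handle the recursively generated data that appears in the rules for W-types. Fix $A : \type$ and $x : A \vdash B(x) : \type$, and let $P$ be the associated polynomial functor from \eqref{eq:polyfunc}.

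For the forward implication, assume the formation, introduction, elimination, and propositional computation rules for W-types. Define a structure map $s_W : PW \rightarrow W$ by $s_W(\pair(a, t)) \defeq \wsup(a, t)$, extended to arbitrary inputs via the $\Sigma$-type $\eta$-rule. To see that $(W, s_W)$ is h-initial, fix another $P$-algebra $(C, s_C)$. Specializing the elimination rule to the constant family $C$ and setting $c(x, u, v) \defeq s_C(\pair(x, v))$ produces a map $f : W \rightarrow C$; the propositional $\W$-computation rule then supplies an identity proof $s_f : \Id(f \circ s_W, s_C \circ Pf)$, so $(f, s_f)$ witnesses the inhabitedness of $\Palg\big[(W, s_W), (C, s_C)\big]$. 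For uniqueness, given a second weak homomorphism $(g, s_g)$, I would apply the general elimination rule to the dependent family $w : W \vdash \id{C}(f(w), g(w))$. The induction step combines $s_f$ and $s_g$ with the Function Extensionality axiom to produce a pointwise identity $f \sim g$, which is then promoted to an identity of pairs in $\Palg$ by means of the $\Sigma$-type $\eta$-rule and Remark~\ref{thm:idcontrcontr}.

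For the reverse implication, assume a homotopy-initial algebra $(W, s_W)$ and set $\wsup(a, t) \defeq s_W(\pair(a, t))$. Given the premises of the elimination rule, namely a family $w : W \vdash C(w) : \type$ and a term $c(x, u, v) : C(\wsup(x, u))$, form the auxiliary $P$-algebra on $W' \defeq (\Sigma w : W) C(w)$ whose structure map sends $\pair(x, t)$ to $\pair\big(\wsup(x, \pi_1 \circ t),\; c(x, \pi_1 \circ t, \pi_2 \circ t)\big)$. By h-initiality there is a weak homomorphism $(r, s_r) : (W, s_W) \rightarrow W'$. Moreover, $\pi_1 : W' \rightarrow W$ is a strict (and hence weak) homomorphism, so contractibility of $\Palg\big[(W, s_W), (W, s_W)\big]$ forces $\pi_1 \circ r \sim 1_W$. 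Transporting $\pi_2(r(w))$ along this identity yields $\wrec(w, c) : C(w)$, exactly mirroring the construction in the proof of Proposition~\ref{prop:2hinitial}.

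The main obstacle is establishing the propositional $\W$-computation rule in this converse direction: we must produce an identity term
\[
\Id\big(\wrec(\wsup(x, u), c),\; c(x, u, \lambda y.\wrec(u(y), c))\big).
\]
Unpacking $\wrec$ in terms of the transported second projection, this requires knitting together (i) the coherence datum $s_r$ witnessing that $r$ is a weak homomorphism, (ii) the identity $p : \pi_1 \circ r \sim 1_W$ extracted from contractibility, (iii) transports along $p$ that must be shown to commute with both $\wsup$ and the recursive expression $\lambda y.\wrec(u(y), c)$, and (iv) the propositional $\eta$-rules for $\Sigma$ and $\Pi$ needed to compare pairs and function terms. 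Each transport introduces further higher identities that must be coherently aligned, and it is precisely this naturality bookkeeping that makes the hand proof impractical and motivates a machine-checked formalization in Coq; the individual steps are standard applications of $\Id$-elimination and function extensionality, but their sheer number leaves little room for informal verification.
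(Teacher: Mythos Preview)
Your reverse implication matches the paper's argument essentially line for line, including the honest acknowledgment that the propositional computation rule is where the real work lies.

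The forward implication, however, has a gap in the uniqueness step. Having built $e : \Id(f,g)$ by $\W$-elimination and function extensionality, you claim this can be ``promoted to an identity of pairs in $\Palg$ by means of the $\Sigma$-type $\eta$-rule and Remark~\ref{thm:idcontrcontr}.'' But an identity in the $\Sigma$-type $\Palg\big[(W,s_W),(C,s_C)\big]$ between $(f,s_f)$ and $(g,s_g)$ unpacks (via the characterization of paths in $\Sigma$-types) into a pair consisting of $e : \Id(f,g)$ \emph{together with} a higher identity $s_e : \Id(e_{\,!}\,s_f,\,s_g)$. Remark~\ref{thm:idcontrcontr} does not supply this second component: it says identity types in a contractible type are contractible, but the type of weak homomorphisms is precisely what you are trying to show is contractible, so invoking it here is circular, and the fiber $\Id(f\circ s_W,\,s_C\circ Pf)$ in which $s_f$ lives is not contractible in general.

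The paper addresses exactly this point. It observes that $\Id\big((f,s_f),(g,s_g)\big)$ is weakly equivalent to the type of \emph{algebra $2$-cells} $(e,s_e)$, where $s_e$ is a coherence equating the two evident pastings of $e$ with $s_f$ and $s_g$. Crucially, the pointwise identity $e$ is not constructed first and then lifted; rather, the $\W$-elimination is set up so that the induction hypothesis already carries the data needed to produce $s_e$ simultaneously. In other words, not every proof of $\Id(f,g)$ will admit the required $s_e$; one must build $e$ with this target in mind. Your sketch should replace the appeal to Remark~\ref{thm:idcontrcontr} with this $2$-cell construction.
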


%%%%%
\begin{proof}[Proof sketch] 
The two implications are proved separately.
First, we show that the rules for homotopical W-types imply the existence
of homotopy-initial algebras for polynomial functors. Let us assume that
$x : A \vdash B(x) : \type$ 
and consider the associated polynomial functor $P$, defined as in~\eqref{eq:polyfunc}.
Using the $\W$-formation rule, we define $W \defeq (\W x : A) B(x)$ and using
the $\W$-introduction rule we define a structure map $s_W : PW \rightarrow W$,
exactly as in the extensional theory. We claim that the algebra $(W, s_W)$ is
h-initial. So, let us consider another algebra $(C,s_C)$ and prove that the type $T$ 
of weak homomorphisms from $(W, s_W)$ to $(C,s_C)$ is contractible. To do
so, observe that the $\W$-elimination rule and the propositional $\W$-computation
rule allow us to define a weak homomorphism $(f, s_f) : (W, s_W) \rightarrow (C, s_C)$, 
thus showing that $T$ is inhabited. Finally, it is necessary to show that for every weak homomorphism $(g, s_g) : (W, s_W) \rightarrow (C, s_C)$, there
is an identity proof 
\begin{equation}
\label{equ:prequired}
p : \Id( (f,s_f), (g,s_g) ) \, .
\end{equation}
This uses the fact that, in general, a type of the form $\Id( (f,s_f), (g,s_g) )$,
is weakly equivalent to the type of what we call \emph{algebra $2$-cells}, whose canonical
elements are pairs of the form $(e, s_e)$, where $e : \Id(f,g)$ and $s_e$ is a higher identity proof witnessing the propositional equality between the identity proofs represented by the following pasting diagrams:
\[
\xymatrix{
PW \ar@/^1pc/[r]^{Pg}   \ar[d]_{s_W} \ar@{}[r]_(.52){s_g}  & PD \ar[d]^{s_D}  \\
W \ar@/^1pc/[r]^g  \ar@/_1pc/[r]_f  \ar@{}[r]|{e} & D } \qquad
\xymatrix{
PW \ar@/^1pc/[r]^{Pg}   \ar[d]_{s_W}   \ar@/_1pc/[r]_{Pf} \ar@{}[r]|{Pe}
& PD \ar[d]^{s_D}  \\
W  \ar@/_1pc/[r]_f  \ar@{}[r]^{s_f} & D }
\]
In light of this fact, to prove that there exists a term as in~\eqref{equ:prequired}, it is 
sufficient to show that there is an algebra 2-cell 
\[
(e,s_e) : (f,s_f) \Rightarrow (g,s_g) \, .
\]
The identity proof $e : \Id(f,g)$ is now constructed by function extensionality and
$\W$-elimination so as to guarantee the existence of the required identity
proof $s_e$. 

\smallskip

For the converse implication, let us assume that the polynomial functor associated
to the judgement $x : A \vdash B(x) : \type$ 
%\[
%x : A \vdash B(x) : \type
%\] 
has an h-initial algebra $(W,s_W)$. To derive the $\W$-formation rule, we 
let  $(\W x {\, : \, } A) B(x) \defeq W$. The $\W$-introduction rule is equally simple to
derive; namely, for $a : A$ and $t \colon B(a) \rightarrow W$,  we define $\wsup(a,t) : W$ as the 
result of applying the structure map $s_W \colon PW \rightarrow W$ to $\pair(a,t) : PW$.
For the $\W$-elimination rule, let us assume its premisses and in particular that $w : W \vdash C(w) : \type$.
%\[
%w : W \vdash C(w) : \type \, .
%\]
Using the other premisses, one shows that the type $C \defeq (\Sigma w : W) C(w)$
can be equipped with a structure map $s_C : PC \rightarrow C$. By the h-initiality of $W$,
we obtain a weak homomorphism $(f, s_f) : (W, s_W) \rightarrow (C, s_C)$. Furthermore,
the first projection $\pi_1 : C \rightarrow W$ can be equipped with the structure of a weak 
homomorphism, so that we obtain a diagram of the form
\[
\xymatrix{
PW \ar[r]^{Pf} \ar[d]_{s_W}  & PC \ar[d]^{s_C}  \ar[r]^{P \pi_1}  & PW  \ar[d]^{s_W}  \\
W \ar[r]_f  & C \ar[r]_{\pi_1}  & W \, .}
\]
But the identity function $1_W : W \rightarrow W$ has a canonical structure of a weak
algebra homomorphism and so, by the contractibility of the type of weak homorphisms
from $(W,s_W)$ to itself, there must be an identity proof between the composite
of $(f,s_f)$ with $(\pi_1, s_{\pi_1})$ and $(1_W, s_{1_W})$. This implies, in particular,
that there is an identity proof $p : \Id( \pi_1 \circ f, 1_W)$. 
%\[
%p : \Id( \pi_1 \circ f, 1_W) \, .
%\]
Since $(\pi_2 \circ f) w : C( (\pi_1 \circ f) w)$, we can define
\[
\wrec(w,c) \defeq
p_{\, ! \,}( ( \pi_2 \circ  f)   w )   : C(w) 
\]
where the transport $p_{\, ! \,}$ is defined via $\Id$-elimination over the dependent type
\[
u : W \rightarrow W \vdash C ( u (w)) : \type \, .
\]
The verification of the propositional $\W$-computation rule is a rather long calculation,
involving several lemmas concerning the naturality properties of operations of the form $p_{\, ! \,}$.
\end{proof}
%%%%%

%%%%%%%%%%%%%%%%%%%%%%%%%%
\subsection{Definability of inductive types}\label{subsec:define}

\noindent We conclude this section by indicating how the limited form of extensionality that is assumed in the type theory~$\Hint$, namely the principle of function extensionality, allows us to overcome the obstacles in defining various inductive types as W-types mentioned at the end of Section~\ref{section:extW}, provided that both are understood in the appropriate homotopical way, \emph{i.e.} with all types being formulated with propositional computation rules. 

Consider first the paradigmatic case of the type of natural numbers. To define it as a W-type, we work in an extension of the type theory $\Hint$ with
\begin{itemize}
\item formation, introduction, elimination and propositional computation rules 
for types $\mathsf{0}$, $\mathsf{1}$ and $\mathsf{2}$ that have zero, one and two canonical elements,
respectively; 
\item the rules for homotopy W-types, as stated above;
\item rules for a type universe $\UU$ reflecting all the forms of types of $\Hint$, W-types, and
$\mathsf{0}$, $\mathsf{1}$ and $\mathsf{2}$.
\end{itemize}
In particular, the rules for $\mathsf{2}$ are those given in Section~\ref{subsection:prelimex}.
We then proceed as follows. We begin by setting $A =\mathsf{2}$, as in the extensional case.  
We then define a dependent type
\[
x : A \vdash B(x) : \UU
\]
by $\mathsf{2}$-elimination, so that the propositional $\mathsf{2}$-computation rules give us 
propositional equalities
\[
 p_0 : \id{U}( \mathsf{0}, B(0)) \, , \qquad
 p_1 : \id{U}( \mathsf{1}, B(1)) \, .
\]
Because of the invariance of the rules for $\mathsf{0}$ and $\mathsf{1}$ under propositional
equalities (as observed in Remark~\ref{thm:wtypesinvariance}), we can then derive that the types~$B(0)$ and~$B(1)$ satisfy rules analogous to those for $\mathsf{0}$ and $\mathsf{1}$, respectively. This allows us to show that the type 
\[
\nat \defeq (\W x : A) B(x)
\]
satisfies the introduction, elimination and propositional computation rules for the type of natural numbers. 
The proof of this fact proceeds essentially as one would expect, but to derive the propositional computation rules it
is useful to observe that for every type $X : \UU$, there are adjoint homotopy equivalences,
in the sense of Definition \ref{thm:weq}, between the types $\mathsf{0}  \rightarrow X$ and $\mathsf{1}$, 
and between $\mathsf{1} \rightarrow X$ and $X$. Indeed, the propositional identities witnessing 
the triangular laws are useful in the verification of the propositional computation rules for $\nat$. For 
details, see the formal development in Coq provided in~\cite{AwodeyS:indtht}.  Observe that as a W-type, $\nat$ is therefore also an h-initial algebra for the equivalent polynomial functor $P(X) = \mathsf{1}+ X$, as expected.

Finally, let us observe that the definition of a type representing the second number class as a W-type,
as discussed in~\cite{MartinLofP:inttt}, carries over equally well. Indeed, one
now must represent type-theoretically a signature with three operations: the first of arity zero, 
the second of arity one, and the third of arity $\nat$. For the first two we can proceed exactly as
before, while for the third there is no need to prove auxiliary results on adjoint homotopy 
equivalences. As before, the second number class supports an h-initial algebra structure for the corresponding polynomial functor $P(X) = \mathsf{1} + X + (\nat \rightarrow X)$.
Again, the formal development of this result in Coq can be found in~\cite{AwodeyS:indtht}.

%Finally, the case $\List(A)$ of finite lists of elements of type $A$ can be treated analogously, in terms of the polynomial functor 
%\[
%P(X) = \mathsf{1} + A\!\times\! X\, .
%\]
%%
%We again refer to~\cite{AwodeyS:indtht} for details.

%%%%%%%%%%%%%%%%%%%%%%%%%%%%%%%%%%
\section{Future work}\label{section:future}
%%%%%%%%%%%%%%%%%%%%%%%%%%%%%%%%%%

\noindent The treatment of W-types presented here is part of a larger investigation of general inductive types in Homotopy Type Theory.  We sketch the projected course of our further research.

\begin{enumerate}
\item  In the setting of  extensional type theory, Dybjer \cite{DybjerP:repids} showed that every strictly positive definable functor can be represented as a polynomial functor, so that all such inductive types are in fact W-types.  This result should generalize to the present setting in a straightforward way.

\item Also in the extensional setting, Gambino and Hyland~\cite{GambinoN:weltdp} showed that 
general tree types~\cite{PeterssonK:setcis}~\cite[Chapter~16]{NordstromB:promlt}, viewed as 
initial algebras for general polynomial functors, can be constructed from W-types in
locally cartesian closed categories, using equalizers. We expect this result to carry over to the present setting as well, using $\id{}$-types in place of equalizers.

\item In \cite{VoevodskyV:notts} Voevodsky has shown that all inductive types of the Predicative Calculus of Inductive Constructions can be reduced to the following special cases:
\begin{itemize}
\item $\mathsf{0}$,\ $\mathsf{1}$,\ $A+B$,\  $(\Sigma x : A)B(x)$,
\item $\id{A}(a,b)$,
\item general tree types.
\end{itemize}
Combining this with the foregoing, we expect to be able to extend our Theorem \ref{theorem:main} to the full system of predicative inductive types underlying Coq.
\end{enumerate}

\noindent Finally, one of the most exciting recent developments in Univalent Foundations is the 
idea of Higher Inductive Types (HITs), which can also involve identity terms in their signature~\cite{LumsdaineP:higit,ShulmanM:higit}.   This allows for algebras with equations between terms, like associative laws, coherence laws, etc.; but the really exciting aspect of HITs comes from the homotopical interpretation of identity terms as paths.  Viewed thus, HITs should permit direct formalization of many basic geometric spaces and constructions, such as the unit interval $I$; the spheres $S^n$, tori, and cell complexes; truncations, such as the [bracket] types \cite{AwodeyS:prot}; various kinds of quotient types; homotopy (co)limits; and many more fundamental and fascinating objects of geometry not previously captured by type-theoretic formalizations.  Our investigation of conventional inductive types in the homotopical setting should lead to a deeper understanding of these new and important geometric analogues.

%%%%%%%%%%%%%%%%%%%%%%%%%%%%%%%%%%%%%%%%%%%%
\section*{Acknowledgements}
%%%%%%%%%%%%%%%%%%%%%%%%%%%%%%%%%%%%%%%%%%%%

\noindent We would like to thank Andrej Bauer, Frank Pfenning, Robert Harper,
Vladimir Voevodsky and Michael Warren for helpful discussions
on the subject of this paper. In particular, Vladimir Voevodsky suggested a simplification of the proof that the rules for homotopical W-types imply h-initiality. 

Steve Awodey gratefully acknowledges the support of the National Science Foundation, Grant DMS-1001191
 and the Air Force OSR, Grant 11NL035.
Nicola Gambino is grateful for the support and the hospitality 
of the Institute for Advanced Study, where
he worked on this project. This work was supported by the National Science Foundation 
under agreement No.\ DMS-0635607. Any opinions, findings and conclusions or recommendations
expressed in this material are those of the authors and do not necessarily reflect the views of
the National Science Foundation.
Kristina Sojakova is grateful for the support of CyLab at Carnegie Mellon under grants DAAD19-02-1-0389
and W911NF-09-1-0273 from the Army Research Office, as well as for the support of the Qatar National Research Fund under grant NPRP 09-1107-1-168.

% Generated by IEEEtran.bst, version: 1.13 (2008/09/30)


\begin{thebibliography}{40}


\bibitem{MartinLofP:intttp}
P.~Martin-L{\"o}f, ``An {I}ntuitionistic {T}heory of {T}ypes: {P}redicative
  {P}art,'' in \emph{Logic Colloquium 1973}, H.~Rose and J.~Shepherdson,
  Eds.\hskip 1em plus 0.5em minus 0.4em\relax North-Holland, 1975, pp. 73--118.

\bibitem{MartinLofP:conmcp}
P.~Martin-{L}\"of, ``Constructive mathematics and computer programming,'' in
  \emph{Proceedings of the Sixth International Congress for Logic, Methodology
  and Philosophy of Science}.\hskip 1em plus 0.5em minus 0.4em\relax
  North-Holland, 1982, pp. 153--175.

\bibitem{MartinLofP:inttt}
P.~Martin-L{\"o}f, \emph{Intuitionistic Type Theory. Notes by G. Sambin of a
  series of lectures given in Padua, 1980}.\hskip 1em plus 0.5em minus
  0.4em\relax Bibliopolis, 1984.

\bibitem{NordstromB:promlt}
B.~Nordstrom, K.~Petersson, and J.~Smith, \emph{Programming in {M}artin-{L}\"of
  type theory}.\hskip 1em plus 0.5em minus 0.4em\relax Oxford University Press,
  1990.

\bibitem{NordstromB:marltt}
------, ``{M}artin-{L}\"of type theory,'' in \emph{Handbook of Logic in
  Computer Science}.\hskip 1em plus 0.5em minus 0.4em\relax Oxford University
  Press, 2000, vol.~5, pp. 1--37.

\bibitem{HowardWH:foratn}
W.~H. Howard, ``The formulae-as-types notion of construction,'' in \emph{To
  {H}.{B}. {C}urry: {E}ssays on {C}ombinatory {L}ogic, {L}ambda {C}alculus and
  {F}ormalism}, J.~P. Seldin and J.~R. Hindley, Eds.\hskip 1em plus 0.5em minus
  0.4em\relax Academic Press, 1980, pp. 479--490.

\bibitem{GrifforE:strsml}
E.~Griffor and M.~Rathjen, ``The strength of some {M}artin-{L}\"of type
  theories,'' \emph{Archive for Mathematical Logic}, vol.~33, no.~5, pp. 347--385, 1994.

\bibitem{HofmannM:extcit}
M.~Hofmann, \emph{Extensional constructs in intensional type theory}.\hskip 1em
  plus 0.5em minus 0.4em\relax Springer-Verlag, 1997.

\bibitem{MaiettiME:mintlf}
M.~E. Maietti, ``A minimalist two-level foundation for constructive
  mathematics,'' \emph{Annals of Pure and Applied Logic}, vol. 160, no.~3, pp.
  319--354, 2009.

\bibitem{DybjerP:repids}
P.~Dybjer, ``Representing inductively defined sets by wellorderings in
  {M}artin-{L}{\"o}f's type theory,'' \emph{Theoretical Computer Science}, vol. 176, pp.
  329--335, 1997.

\bibitem{MoerdijkI:weltc}
I.~Moerdijk and E.~Palmgren, ``Wellfounded trees in categories,'' \emph{Annals
  of Pure and Applied Logic}, vol. 104, pp. 189--218, 2000.

\bibitem{GambinoN:weltdp}
N.~Gambino and M.~Hyland, ``{Wellfounded Trees and Dependent Polynomial
  Functors},'' in \emph{Types for Proofs and Programs (TYPES 2003)}, ser. LNCS,
  S.~Berardi, M.~Coppo, and F.~Damiani, Eds., vol. 3085, 2004, pp. 210--225.

\bibitem{AbbottM:concsp}
M.~Abbott, T.~Altenkirch, and N.~Ghani, ``Containers: Constructing strictly
  positive types,'' \emph{Theoretical Computer Science}, vol. 342, no.~1, pp. 3--27, 2005.

\bibitem{GoguenH:inddtw}
H.~Goguen and Z.~Luo, ``Inductive data types: well-ordering types revisited,''
  in \emph{Logical Environments}, G.~Huet and G.~Plotkin, Eds.\hskip 1em plus
  0.5em minus 0.4em\relax Cambridge University Press, 1993, pp. 198--218.

\bibitem{AwodeyS:homtmi}
S.~Awodey and M.~A. Warren, ``Homotopy theoretic models of identity types,''
  \emph{Mathematical Proceedings of the Cambridge Philosophical Society}, vol.
  146, pp. 45--55, 2009.

\bibitem{VoevodskyV:notts}
V.~Voevodsky, ``Notes on type systems,'' 2009, available from the author's web
  page.

\bibitem{vandenBergB:topsmi}
B.~van~den Berg and R.~Garner, ``Topological and simplicial models of identity
  types,'' 2011, arXiv:1007.4638v2. To appear in \emph{ACM Transactions in
  Computational Logic}.

\bibitem{VoevodskyV:unifp}
V.~Voevodsky, ``Univalent foundations project,'' 2010, available from the
  author's web page.

\bibitem{AwodeyS:indtht}
S.~Awodey, N.~Gambino, and K.~Sojakova, ``Inductive types in {H}omotopy {T}ype
  {T}heory: {C}oq proofs,'' 2012, available from 
\url{https://github.com/HoTT/Archive}.

\bibitem{AwodeyS:typth}
S.~Awodey, ``Type theory and homotopy,'' 2010, available from the author's web
  page.

\bibitem{BertotY:inttpp}
Y.~Bertot and P.~Cast{\'e}ran, \emph{Interactive Theorem Proving and Program
  Development. {C}oq'{A}rt: the {C}alculus of {I}nductive
  {C}onstructions}.\hskip 1em plus 0.5em minus 0.4em\relax Springer Verlag,
  2004.

\bibitem{NorellU:towppl}
U.~Norell, ``Towards a practical programming language based on dependent type
  theory,'' Ph.D. dissertation, Chalmers University of Technology, 2007.

\bibitem{McBrideC:viefl}
C.~McBride and J.~McKinna, ``The view from the left,'' \emph{Journal of
  Functional Programming}, vol.~14, no.~1, pp. 69--111, 2004.

\bibitem{CoquandT:inddt}
T.~Coquand and C.~Paulin-Mohring, ``Inductively defined types,'' in
  \emph{Proceedings of Colog'88}, ser. LNCS, vol. 417.\hskip 1em plus 0.5em
  minus 0.4em\relax Springer, 1990.

\bibitem{PaulinMorhringC:inddsc}
C.~Paulin-Mohring, ``Inductive definitions in the system {C}oq - {R}ules and
  {P}roperties,'' in \emph{Typed Lambda Calculi and Applications}, ser. LNCS,
  vol. 664.\hskip 1em plus 0.5em minus 0.4em\relax Springer, 1993.

\bibitem{McBrideC:wtygnb}
C.~McBride, ``W-types: good news and bad news,'' 2010, {P}ost on the {E}pigram
  blog.

\bibitem{StreicherT:invitt}
T.~Streicher, ``Investigations into intensional type theory,'' 1993,
  {H}abilitation Thesis. Available from the author's web page.

\bibitem{AltenkirchT:obsen}
T.~Altenkirch, C.~McBride, and W.~Swierstra, ``Observational equality, now!''
  in \emph{PLPV '07: Proceedings of the 2007 workshop on Programming languages
  meets program verification}.\hskip 1em plus 0.5em minus 0.4em\relax ACM,
  2007, pp. 57--68.

\bibitem{GarnerR:strdpt}
R.~Garner, ``On the strength of dependent products in the type theory of
  {M}artin-{L}\"of,'' \emph{Annals of Pure and Applied Logic}, vol. 160, pp. 1--12, 2009.

\bibitem{VoevodskyV:unifc}
V.~Voevodsky, ``Univalent foundations {C}oq files,'' 2010, available from the
  author's web page.

\bibitem{HofmannM:gromtt}
M.~Hofmann and T.~Streicher, ``The groupoid model of type theory,'' in
  \emph{Twenty-five years of constructive type theory}, G.~Sambin and J.~Smith,
  Eds.\hskip 1em plus 0.5em minus 0.4em\relax Oxford University Press, 1995.

\bibitem{BataninM:mongcn}
M.~Batanin, ``Monoidal globular categories as a natural environment for the
  theory of weak $n$-categories,'' \emph{Advances in Mathematics}, vol. 136,
  no.~1, pp. 39--103, 1998.

\bibitem{LeinsterT:higohc}
T.~Leinster, \emph{Higher operads, higher categories}.\hskip 1em plus 0.5em
  minus 0.4em\relax Cambridge University Press, 2004.

\bibitem{vandenBergB:typwg}
B.~van~den Berg and R.~Garner, ``Types are weak $\omega$-groupoids,''
  \emph{Proceedings of the London Mathematical Society}, vol. 102, no.~3, pp.
  370--394, 2011.

\bibitem{LumsdaineP:weaci}
P.~Lumsdaine, ``Weak $\omega$-categories from intensional type theory,'' in
  \emph{Typed Lambda Calculi and Applications}, ser. LNCS, P.-L. Curien, Ed.,
  no. 5608.\hskip 1em plus 0.5em minus 0.4em\relax Springer, 2009, pp.
  172--187.

\bibitem{Lumsdaine:higcft}
------, ``Higher categories from type theories,'' Ph.D. dissertation, Carnegie
  Mellon University, 2010.

\bibitem{PeterssonK:setcis}
K.~Petersson and D.~Synek, ``A set constructor for inductive sets in
  {M}artin-{L}\"of type theory,'' in \emph{Proceedings of the 1989 Conference
  on Category Theory and Computer Science, Manchester, {U.}{K.}}, ser. LNCS,
  vol. 389.\hskip 1em plus 0.5em minus 0.4em\relax Springer-Verlag, 1989.

\bibitem{LumsdaineP:higit}
P.~Lumsdaine, ``Higher inductive types: a tour of the managerie,'' 2011, {P}ost
  on the Homotopy Type Theory blog.

\bibitem{ShulmanM:higit}
M.~Shulman, ``Homotopy {T}ype {T}heory, {VI},'' 2011, {P}ost on the
  $n$-category caf\'e blog.

\bibitem{AwodeyS:prot}
S.~Awodey and A.~Bauer, ``Propositions as [types],'' \emph{Journal of Logic and
  Computation}, vol.~14, no.~4, pp. 447--471, 2004.

\end{thebibliography}
\end{document}
